\newtheorem*{thm*}{Theorem}
\newtheorem{thm}{Theorem}
\newtheorem{lem}[thm]{Lemma}
\newtheorem{cla}[thm]{Claim}
\date{}
\title{Uniform $s$-cross-intersecting families}
\author{Peter Frankl, Andrey Kupavskii\footnote{Moscow Institute of Physics and Technology, \'Ecole Polytechnique F\'ed\'erale de Lausanne; Email: {\tt kupavskii@yandex.ru} \ \ Research supported in part by the Swiss National Science Foundation Grants 200021-137574 and 200020-14453 and by the grant N 15-01-03530 of the Russian Foundation for Basic Research.}}
\date{}
\begin{document}
\maketitle
\begin{abstract} In this paper we study a question related to the classical Erd\H os--Ko--Rado theorem, which states that any family of $k$-element subsets of the set $[n] = \{1,\ldots,n\}$ in which any two sets intersect, has cardinality at most ${n-1\choose k-1}$.

We say that two non-empty families are $\mathcal A, \mathcal B\subset {[n]\choose k}$ are \textit{$s$-cross-intersecting}, if for any $A\in\mathcal A,B\in \mathcal B$ we have $|A\cap B|\ge s$.  In this paper we determine the maximum of $|\mathcal A|+|\mathcal B|$ for all $n$. This generalizes a result of Hilton and Milner, who determined the maximum of $|\mathcal A|+|\mathcal B|$ for nonempty $1$-cross-intersecting families.
\end{abstract}
MSc classification: 05D05
\section{Introduction}
Let $[n]= \{1,\ldots, n\}$ be an $n$-element set and for $0\le k\le n$ let ${[n]\choose k}$ denote the collection of all its $k$-element subsets. Let further $2^{[n]}$ denote the power set. Any subset $\mathcal F\subset 2^{[n]}$ is called a \textit{set family} or \textit{family} for short. The following classical result is one of the cornerstones of extremal set theory. If $F\cap F'\ne \emptyset$ for all $F,F'\in\mathcal F$, then $\mathcal F$ is called \textit{intersecting}.

\begin{thm}[Erd\H os, Ko, and Rado \cite{EKR}] If $\mathcal A\subset{[n]\choose k}$ is intersecting and $n\ge 2k$, then $|\mathcal A|\le {n-1\choose k-1}$ holds.
\end{thm}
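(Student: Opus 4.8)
The plan is to use Katona's cyclic permutation (circle) method, which gives the shortest self-contained route to this bound. Fix a cyclic ordering $\sigma$ of $[n]$ -- think of $1,\dots,n$ placed around a circle -- and call a $k$-set an \emph{arc} of $\sigma$ if its $k$ elements occupy consecutive positions. There are exactly $n$ arcs in each cyclic ordering and $(n-1)!$ cyclic orderings in all. The proof has two ingredients: a local bound on how many arcs an intersecting family can contain inside a single cyclic ordering, and a double-counting (averaging) step.

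\emph{Step 1 (local bound).} I would show that for every cyclic ordering $\sigma$, at most $k$ members of $\mathcal A$ are arcs of $\sigma$. If $\mathcal A$ contains no arc of $\sigma$ this is vacuous; otherwise pick an arc $A=(a_1,\dots,a_k)\in\mathcal A$, with the $a_i$ listed in cyclic order. Since $|A|+|B|=2k\le n$ for any other arc $B$, the intersection $B\cap A$ is a nonempty contiguous sub-arc of $A$, and $B\setminus A$ lies in the $(n-k)$-element complement of $A$, attached to one of the two ends of $A$. Hence every arc $B\ne A$ meeting $A$ either consists of an initial block $\{a_1,\dots,a_q\}$ together with $k-q$ elements lying immediately before $a_1$, or of a final block $\{a_{q+1},\dots,a_k\}$ together with $q$ elements lying immediately after $a_k$, for some $1\le q\le k-1$. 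Pairing the first kind with the second kind for each fixed $q$ gives $k-1$ pairs, and the two arcs in a pair are disjoint \emph{precisely because $n\ge 2k$}: their "outside" parts sit at opposite ends of the $(n-k)$-element complement of $A$ and the position ranges they occupy fail to overlap exactly when $2k\le n$. As $\mathcal A$ is intersecting, it contains at most one arc from each pair, hence at most $1+(k-1)=k$ arcs of $\sigma$ in total.

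\emph{Step 2 (averaging).} Count pairs $(\sigma,A)$ with $A\in\mathcal A$ an arc of $\sigma$. By Step 1 this number is at most $k\,(n-1)!$. On the other hand, by symmetry every fixed $k$-set is an arc of the same number of cyclic orderings; since the total number of (ordering, arc) incidences is $n\,(n-1)!=n!$, spread evenly over the ${n\choose k}$ sets, that number is $n!/{n\choose k}=k!\,(n-k)!$. Therefore $|\mathcal A|\cdot k!\,(n-k)!\le k\,(n-1)!$, and rearranging gives $|\mathcal A|\le {n-1\choose k-1}$.

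The only real obstacle is Step 1: setting up the pairing of arcs cleanly and checking that the hypothesis $n\ge 2k$ is exactly what forces paired arcs to be disjoint (and handling the boundary case $n=2k$, where paired arcs are in fact complementary). Step 2 is routine bookkeeping. One could alternatively deduce the theorem from the Kruskal--Katona theorem or via the shifting/compression technique, but those are longer to set up from scratch.
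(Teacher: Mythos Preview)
Your proof via Katona's cyclic permutation method is correct and complete: the local bound in Step~1 is the standard pairing argument (and you correctly identify that $n\ge 2k$ is exactly what makes the two arcs in each pair disjoint, with complementarity at $n=2k$), and the double count in Step~2 is right, since each $k$-set is an arc in $n!/\binom{n}{k}=k!(n-k)!$ of the $(n-1)!$ cyclic orderings.

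There is nothing to compare against, however: the paper does not prove this theorem at all. It is quoted as a classical background result with a citation to the original Erd\H os--Ko--Rado paper, and the authors move on immediately to their own contribution (Theorem~\ref{thm5}). The technique they do develop in their own proof---shifting plus an orbit/independent-set argument---is one of the alternative routes you mention at the end, but they never apply it to re-derive the EKR bound itself.
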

This is one of the first results in extremal set theory and probably the first result about intersecting families.

Numerous results extended the Erd\H os--Ko--Rado theorem in different ways. One of the directions was to study \textit{non-trivial} intersecting families, that is, excluding the obvious examples of intersecting families of sets that all contain a fixed element. Note that the family of all sets containing a single element has the size matching the bound from the EKR theorem. Probably, the most known result in this direction is the Hilton--Milner theorem \cite{HM}, which gives the maximum size of a non-trivial intersecting family.

In the same paper Hilton and Milner dealt with pairs of \textit{cross-intersecting} families. We call $\mathcal A,\mathcal B\subset 2^{[n]}$ \textit{cross-intersecting}, if for every $A\in\mathcal A$ and $B\in \mathcal B$ we have $A\cap B\ne \emptyset$. They proved the following inequality:
\begin{thm}\label{thmhm}[Hilton and Milner \cite{HM}] Let $\mathcal A,\mathcal B\subset{[n]\choose k}$ be non-empty cross-intersecting families with $n\ge 2k$. Then $|\mathcal A|+|\mathcal B|\le {n\choose k}-{n-k\choose k}+1.$
\end{thm}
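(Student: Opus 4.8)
The plan is to pass to complements, apply the Kruskal--Katona theorem, and reduce everything to a numerical inequality.

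Since $\mathcal A$ and $\mathcal B$ are non-empty, fix some $B_0\in\mathcal B$; every $A\in\mathcal A$ meets $B_0$, so $N:=|\mathcal A|$ satisfies $1\le N\le{n\choose k}-{n-k\choose k}$. Let $\bar{\mathcal A}=\{[n]\setminus A:A\in\mathcal A\}\subseteq{[n]\choose n-k}$. The cross-intersecting hypothesis says precisely that no $B\in\mathcal B$ is a subset of any member of $\bar{\mathcal A}$; equivalently, $\mathcal B$ is disjoint from the family $\mathcal C$ of all $k$-element subsets of members of $\bar{\mathcal A}$, that is, $\mathcal C=\partial^{(n-2k)}\bar{\mathcal A}$ (when $n=2k$ one simply has $\mathcal C=\bar{\mathcal A}$ and the rest is trivial). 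Hence $|\mathcal B|\le{n\choose k}-|\mathcal C|$, and it suffices to prove
$$|\mathcal C|\ \ge\ N+{n-k\choose k}-1 .$$

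To bound $|\mathcal C|=\bigl|\partial^{(n-2k)}\bar{\mathcal A}\bigr|$ from below I would use the Kruskal--Katona theorem in its exact ``cascade'' form: writing the $(n-k)$-cascade representation $N={a_{n-k}\choose n-k}+{a_{n-k-1}\choose n-k-1}+\dots$, a standard iteration of Kruskal--Katona gives $\bigl|\partial^{(n-2k)}\bar{\mathcal A}\bigr|\ \ge\ B(N):={a_{n-k}\choose k}+{a_{n-k-1}\choose k-1}+\dots$, obtained by lowering every bottom index by $n-2k$; equivalently $B(N)=\bigl|\partial^{(n-2k)}\mathcal D_N\bigr|$, where $\mathcal D_N\subseteq{[n]\choose n-k}$ is the initial segment of length $N$ in colex order. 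It is essential here to use the cascade form and not the cruder estimate $\bigl|\partial^{(n-2k)}\bar{\mathcal A}\bigr|\ge{x\choose k}$ with ${x\choose n-k}=N$: in the extremal regime the true shadow is much larger (for instance, if $\mathcal A=\{A:A\cap S\ne\emptyset\}$ for a fixed $k$-set $S$, then $|\mathcal C|={n\choose k}-1$), so the crude estimate falls short of what is needed.

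It then remains to prove the numerical inequality $B(N)\ge N+{n-k\choose k}-1$ for all $1\le N\le{n\choose k}-{n-k\choose k}$. Here I would argue directly with colex-initial segments. For such $N$ the segment $\mathcal D_N$ is contained in $\mathcal D^-:=\{D\in{[n]\choose n-k}:\{n-k+1,\dots,n\}\not\subseteq D\}$, which is itself the colex-initial segment of length ${n\choose k}-{n-k\choose k}$ and whose complement in ${[n]\choose n-k}$ has exactly ${n-k\choose k}$ members; since $\mathcal D_N$ and $\partial^{(n-2k)}\mathcal D_N$ are both colex-initial segments, the inequality reduces to a tractable computation of how $B(N)$ grows with $N$. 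Concretely, one checks the endpoint values $B(1)={n-k\choose k}$ and $B\bigl({n\choose k}-{n-k\choose k}\bigr)=\sum_{j=1}^{k}{n-j\choose k-j+1}={n\choose k}-1$ (both giving equality, and matching the two extremal configurations $\mathcal A=\{S\}$, $\mathcal B=\{B:B\cap S\ne\emptyset\}$ and its transpose), and then that each unit step of $N$ inside this range increases $B$ by at least $1$, using $n\ge 2k$ to keep the binomial coefficients involved in their increasing range. Combining the three displayed facts gives $|\mathcal A|+|\mathcal B|\le N+{n\choose k}-|\mathcal C|\le{n\choose k}-{n-k\choose k}+1$.

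The main obstacle is exactly this last numerical inequality. It is an equality at both ends of the admissible range $1\le N\le{n\choose k}-{n-k\choose k}$, so there is no slack to waste: one has to use the hypothesis $\mathcal B\ne\emptyset$ in its sharp form (it is precisely what bounds $N$ from above) together with $n\ge 2k$, and the genuinely delicate point is controlling the step $B(N)\mapsto B(N+1)$ uniformly over the whole range.
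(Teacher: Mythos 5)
The paper does not actually prove this statement --- it is quoted from Hilton and Milner --- so the only question is whether your argument stands on its own. Your reduction is the classical Kruskal--Katona route and is correct up to the last step: passing to $\bar{\mathcal A}$, identifying cross-intersection with $\mathcal B\cap\partial^{(n-2k)}\bar{\mathcal A}=\emptyset$, invoking the cascade (exact) form of Kruskal--Katona, and reducing everything to the inequality $B(N)\ge N+\binom{n-k}{k}-1$ for $1\le N\le\binom{n}{k}-\binom{n-k}{k}$ are all fine, as are your two endpoint computations.

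The gap is the claim that ``each unit step of $N$ inside this range increases $B$ by at least $1$.'' This is false, and your own endpoint values show it cannot be repaired: with $M=\binom{n}{k}-\binom{n-k}{k}$, the total increase over the range is $B(M)-B(1)=\bigl(\binom{n}{k}-1\bigr)-\binom{n-k}{k}=M-1$, so the \emph{average} increment is exactly $1$; since the very first increment is $B(2)-B(1)=\binom{n-k-1}{k-1}>1$ for $k\ge2$ and $n>2k$ (the second colex set $[n-k-1]\cup\{n-k+1\}$ contributes all $\binom{n-k-1}{k-1}$ of its $k$-subsets containing $n-k+1$ as new shadow elements), some later increment must be $0$. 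Concretely, for $n=5$, $k=2$ one gets $B(1),\dots,B(7)=3,5,6,6,8,9,9$: the steps $3\to4$ and $6\to7$ add nothing. The inequality $B(N)\ge N+\binom{n-k}{k}-1$ is nevertheless true (it holds in these examples and is equivalent to the theorem for colex initial segments), but proving it requires a genuinely different mechanism --- for instance grouping the colex segment into blocks according to which of the top elements a set contains, or an induction on $n$ as in \cite{FT} or \cite{FK2} --- rather than a per-step monotonicity claim. As written, your proof is incomplete precisely at the point you yourself flag as the delicate one.
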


This inequality was generalized by Frankl and Tokushige \cite{FT} to the case $\mathcal A\subset{[n]\choose a}$, $\mathcal B\subset{[n]\choose b}$ with $a\ne b$, and with more general constraints on the sizes of $\mathcal A, \mathcal B$. A simple proof of the theorem above may be found in \cite{FK2}.

We say that two families $\mathcal A,\mathcal B\subset 2^{[n]}$ are \textit{$s$-cross-intersecting}, if for any $A\in\mathcal A, B\in \mathcal B$ we have $|A\cap B|\ge s$.
In this paper we prove the following generalization of the Hilton--Milner theorem. Define the family $\mathcal C = \{C\subset{[n]\choose k}: |C\cap [k]|\ge s\}$.

\begin{thm}\label{thm5} Let $k>s\ge 1$ be integers. Suppose that $\mathcal A,\mathcal B\subset{[n]\choose k}$ are non-empty $s$-cross-intersecting families. Then for $n> 2k-s$ we have
\begin{equation}\label{eqthm5} \max_{\mathcal A,\mathcal B}|\mathcal A|+|\mathcal B|= |\mathcal C|+1.\end{equation}
\end{thm}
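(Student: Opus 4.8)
The plan is to prove the lower bound by an explicit construction and the upper bound by a shifting argument that reduces the problem to a single counting inequality about shifted families. For the lower bound, take $\mathcal B=\{[k]\}$ and $\mathcal A=\mathcal C$: every $A\in\mathcal C$ satisfies $|A\cap[k]|\ge s$, so this pair is non-empty and $s$-cross-intersecting, and $|\mathcal A|+|\mathcal B|=|\mathcal C|+1$. Hence the whole content is the upper bound $|\mathcal A|+|\mathcal B|\le|\mathcal C|+1$ for an arbitrary admissible pair.

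For the upper bound I would first reduce to shifted families. A short case check shows that applying an $(i,j)$-shift simultaneously to $\mathcal A$ and $\mathcal B$ preserves their cardinalities, their non-emptiness, and the $s$-cross-intersecting property; so we may assume $\mathcal A,\mathcal B$ are shifted. A non-empty shifted subfamily of $\binom{[n]}{k}$ contains $[k]$, so $[k]\in\mathcal A\cap\mathcal B$, and then every $A\in\mathcal A$ and every $B\in\mathcal B$ meets $[k]$ in at least $s$ points, i.e. $\mathcal A,\mathcal B\subseteq\mathcal C$. Put $\mathcal C^{*}(\mathcal B):=\{X\in\binom{[n]}{k}:\ |X\cap B|\ge s\text{ for all }B\in\mathcal B\}$; then $\mathcal A\subseteq\mathcal C^{*}(\mathcal B)$ and $\mathcal C^{*}(\mathcal B)\subseteq\mathcal C$ (take $B=[k]$). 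Thus $|\mathcal A|+|\mathcal B|\le|\mathcal C^{*}(\mathcal B)|+|\mathcal B|$, and the theorem follows once we establish, for every shifted $\mathcal B\subseteq\mathcal C$ with $[k]\in\mathcal B$ and every $n>2k-s$,
\[
|\mathcal C\setminus\mathcal C^{*}(\mathcal B)|\ \ge\ |\mathcal B|-1 .
\]

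This last inequality is the heart of the matter, and I would prove it by constructing an injection $\phi$ from $\mathcal B\setminus\{[k]\}$ into $\mathcal C\setminus\mathcal C^{*}(\mathcal B)=\{X\in\mathcal C:\ |X\cap B|\le s-1\ \text{for some}\ B\in\mathcal B\}$. For $B\in\mathcal B$ with $B\neq[k]$ write $P=B\cap[k]$ (so $|P|\ge s$ since $B\in\mathcal C$), $R=[k]\setminus B$, $Q=B\setminus[k]$, noting $1\le|R|=|Q|$. The prototype for $\phi(B)$ is $([k]\setminus S)\cup T$ with $S\subseteq P$, $|S|=|P|-s+1$, and $T\subseteq\{k+1,\ldots,n\}\setminus Q$, $|T|=|S|$: then $|\phi(B)|=k$, $|\phi(B)\cap[k]|=s-1+|R|\ge s$ so $\phi(B)\in\mathcal C$, and $|\phi(B)\cap B|=|P\setminus S|=s-1$, which together with $B\in\mathcal B$ places $\phi(B)$ in $\mathcal C\setminus\mathcal C^{*}(\mathcal B)$. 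A set $T$ of the required size exists precisely because $|\{k+1,\ldots,n\}\setminus Q|=n-k-|Q|\ge|S|$ rearranges to $n\ge 2k-s+1$; this is the unique place where the hypothesis $n>2k-s$ is used, and the construction visibly collapses for smaller $n$ (consistently with the fact that for $n\le 2k-s$ all $k$-sets are automatically $s$-cross-intersecting).

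The step I expect to be the real obstacle is ensuring injectivity of $\phi$. From $\phi(B)$ one immediately recovers $S=[k]\setminus\phi(B)$ and $T=\phi(B)\setminus[k]$, hence $|P|$ and $|R|=|Q|$; what remains is to pin down the $(s-1)$-subset $P\setminus S$ inside $\phi(B)\cap[k]$ and the $|Q|$-subset $Q$ inside $\{k+1,\ldots,n\}\setminus T$. When $n=2k-s+1$ the constraint $|T|=|S|$ forces $T=\{k+1,\ldots,n\}\setminus Q$, which already encodes $Q$, and a canonical choice of $S$ (e.g. the largest elements of $P$), combined with the fact that shiftedness pushes the members of $\mathcal B$ "to the left", recovers $P$; so the base range of $n$ is clean. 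For larger $n$ there is spare room in $\{k+1,\ldots,n\}$ and one must instead arrange that $\phi(B)$ carries a full encoding of $Q$ — and it may help to use a suitable down-shift of $B$ rather than $B$ itself as the witness set, since all such down-shifts also lie in the shifted family $\mathcal B$. Making these canonical choices and ruling out collisions, using only shiftedness of $\mathcal B$, is the technical core of the argument. An alternative route I would pursue in parallel is induction on $n$: splitting each family according to membership of the element $n$ reduces the displayed inequality to a statement about $\mathcal B_{0}\subseteq\binom{[n-1]}{k}$ and the link $\mathcal B_{1}\subseteq\binom{[n-1]}{k-1}$, where shiftedness forces every one-element extension of a member of $\mathcal B_{1}$ to lie in $\mathcal B_{0}$, and where the parameter change $(n,k,s)\mapsto(n-1,k-1,s-1)$ on the link keeps the inequality $n>2k-s$ intact; here the non-uniformity of the slices is the delicate point.
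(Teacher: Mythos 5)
Your lower bound construction and the reduction to shifted families are both correct, and your reformulation of the upper bound as the inequality $|\mathcal C\setminus\mathcal C^{*}(\mathcal B)|\ge|\mathcal B|-1$ for a shifted $\mathcal B\ni[k]$ is a sound (and standard) restatement. The prototype map $\phi$ is also well defined: for a given $B$, one can choose $S\subset P=B\cap[k]$ with $|S|=|P|-s+1$ and $T\subset[k+1,n]\setminus Q$ of the same size precisely because $n\ge 2k-s+1$, and then $\phi(B)\in\mathcal C$ and $|\phi(B)\cap B|=s-1$, so $\phi(B)\notin\mathcal C^{*}(\mathcal B)$. But the crux, as you yourself flag, is making $\phi$ injective, and that step is not done. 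Note that the inequality you need is exactly Hall's condition for the bipartite graph $G$ on two copies of $\mathcal C\setminus\{[k]\}$ with edges between sets intersecting in fewer than $s$ elements; a valid injection for every $\mathcal B'$ would amount to a perfect matching. The paper explicitly records that the authors tried this route and could not carry it out in general, which is a strong indication that the collision analysis you postpone is not a routine finish but the genuine obstruction. Restricting $\mathcal B$ to be shifted does give extra structure, but you do not supply any argument that rules out collisions, nor a canonical choice of $S$ and $T$ that does so for $n>2k-s+1$; the remark that "it may help to use a suitable down-shift of $B$" is only a direction, not a proof. The alternative induction on $n$ that you sketch also stops at exactly the point you identify as delicate (the non-uniform link), so neither route is closed.

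For contrast, the paper sidesteps the injection/matching difficulty entirely. It introduces the bipartite graph $G$ as above, applies the automorphism group $\Gamma=S_{[k]}\times S_{[k+1,n]}$, and uses Lemma~\ref{lemorbit} (LP duality for fractional vertex covers) to reduce the problem to a small \emph{weighted} bipartite graph $W$ whose vertices are the orbits $\mathcal C_i^j$. It then decomposes $W$ into symmetric even-length paths with a ``type two'' edge in the middle, and Lemma~\ref{lemcoeff} shows the orbit weights are monotone toward that middle edge, so every independent set in each path has at most half the weight, hence $\alpha(G)\le|\mathcal C|-1$. This averaging argument gives the bound without ever exhibiting an explicit injection. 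To repair your write-up you would either have to complete the collision analysis for $\phi$ (likely hard, per the authors' own remark), or switch to an argument of this averaging type.
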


\section{Preliminaries}
We start with the following auxiliary statement, which is of independent interest.   Consider a bipartite graph $H$ with parts $V_1,V_2$ and a group of automorphisms $\Gamma$,
where $\Gamma$ is acting on $V_1$ and $V_2$ but respects the parts (that is,
$\forall \gamma \in \Gamma$ and $v\in V_i$ $\gamma(v)\in V_i$ holds).
Let $T_1,\ldots, T_p$ and $S_1,\ldots, S_q$ be the orbits of the action of $\Gamma$, with  $V_1 = T_1\sqcup \ldots\sqcup T_p$ and $V_2 =  S_1\sqcup\ldots \sqcup S_q$.

\begin{lem}\label{lemorbit} There exists an independent set $I$ in $H$ of maximal cardinality, such that for any $i$ either $I\supset T_i$ or $I\cap T_i =\emptyset$ and for any $j$ either $I\supset S_j$ or $I\cap S_j=\emptyset$.
\end{lem}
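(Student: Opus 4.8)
The plan is to produce a single maximum independent set $I_0$ that is invariant under the whole group, i.e.\ $\gamma(I_0)=I_0$ for every $\gamma\in\Gamma$. Since each $\gamma$ permutes the vertices within each orbit and preserves the parts, a subset of $V_1\cup V_2$ is $\Gamma$-invariant exactly when it is a union of whole orbits $T_i$ and $S_j$; so such an $I_0$ is precisely what the lemma demands. To locate it I will equip the collection $\mathcal I$ of all maximum independent sets of $H$ with a partial order and take its least element.

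Write $m$ for the maximum size of an independent set in $H$, and for $S\subseteq V_1$ let $\overline N(S)=\{v\in V_2:\ v\text{ has no neighbour in }S\}$. Order $\mathcal I$ by $I\preceq I'$ iff $I\cap V_1\subseteq I'\cap V_1$; note that a maximum independent set $I$ is inclusion-maximal, whence $I\cap V_2=\overline N(I\cap V_1)$, and in particular a member of $\mathcal I$ is determined by its trace on $V_1$. The key claim is that $\mathcal I$ is closed under ``meet'': given $I,I'\in\mathcal I$ with $A:=I\cap V_1$, $B:=I'\cap V_1$, the set $J:=(A\cap B)\cup\overline N(A\cap B)$ again lies in $\mathcal I$. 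Indeed, $J$ and $K:=(A\cup B)\cup\overline N(A\cup B)$ are both visibly independent, and using $\overline N(A\cup B)=\overline N(A)\cap\overline N(B)$, $\overline N(A\cap B)\supseteq\overline N(A)\cup\overline N(B)$ and $|A\cap B|+|A\cup B|=|A|+|B|$ one computes
\[
|J|+|K|=(|A|+|B|)+\bigl(|\overline N(A\cap B)|+|\overline N(A\cup B)|\bigr)\ \ge\ |A|+|B|+|\overline N(A)|+|\overline N(B)|=|I|+|I'|=2m.
\]
As $|J|,|K|\le m$, equality holds throughout, so $J\in\mathcal I$, and $J\cap V_1=A\cap B$ by construction. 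Because a member of $\mathcal I$ is pinned down by its $V_1$-trace, this meet operation is just intersection of $V_1$-traces, hence associative and commutative, and iterating it over the finite set $\mathcal I$ produces a $\preceq$-least element $I_0$, characterised by $I_0\cap V_1=\bigcap_{I\in\mathcal I}(I\cap V_1)$.

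To finish, observe that every $\gamma\in\Gamma$ maps independent sets bijectively to independent sets of the same size, hence permutes $\mathcal I$, and — because $\gamma$ respects the parts — satisfies $\gamma(I\cap V_1)=\gamma(I)\cap V_1$, so $I\preceq I'$ if and only if $\gamma I\preceq\gamma I'$. Thus $\Gamma$ acts on $(\mathcal I,\preceq)$ by order automorphisms, and an order automorphism fixes the unique least element; therefore $\gamma(I_0)=I_0$ for all $\gamma\in\Gamma$, which by the first paragraph is exactly the assertion of the lemma.

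The step I expect to be the crux is the meet-closure claim: spotting the submodular-type inequality $|\overline N(A\cap B)|+|\overline N(A\cup B)|\ge|\overline N(A)|+|\overline N(B)|$ and pairing it with inclusion-maximality to force $J$ back into $\mathcal I$; everything else is bookkeeping. One should also keep an eye on where the hypothesis that $\Gamma$ respects the parts is used — it is genuinely needed, since for a single edge $\{u,v\}$ the transposition of $u$ and $v$ leaves no maximum independent set invariant — and it enters precisely in the claim that $\Gamma$ preserves the order $\preceq$.
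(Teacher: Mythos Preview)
Your argument is correct and takes a genuinely different route from the paper. The paper passes to the weighted ``orbit graph'' $W$, observes that any independent set in $H$ yields a fractional independent set in $W$ (via the biregularity of each orbit pair), and then invokes the integrality of the bipartite vertex-cover polytope (LP duality / K\"onig-type result) to extract an integral optimum, which unwinds to a union of orbits. You instead stay inside $H$ and exploit a lattice structure on the family $\mathcal I$ of maximum independent sets: the submodularity inequality for $\overline N$ shows $\mathcal I$ is closed under the meet $I\wedge I'$ with $(I\wedge I')\cap V_1=(I\cap V_1)\cap(I'\cap V_1)$, so there is a unique $\preceq$-least element, which is then forced to be $\Gamma$-invariant. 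Your approach is more self-contained --- it avoids the appeal to an external LP-integrality theorem --- and makes transparent exactly where bipartiteness and the part-preserving hypothesis on $\Gamma$ enter; the paper's approach, on the other hand, sets up the weighted auxiliary graph $W$ that is reused throughout the rest of the proof of Theorem~\ref{thm5}, so it doubles as infrastructure.
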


Make an auxiliary bipartite graph $W$ on the set of vertices $\{T_1,\ldots, T_p\}$ and $\{S_1,\ldots, S_q\}$ and with edges between $T_i$ and $S_j$ iff there is at least one edge connecting a vertex of $T_i$ to a vertex of $S_j$. Put weights $|T_i|, |S_j|$ on the vertices $T_i,S_j$, correspondingly.

Let $B\subset V_1\cup V_2$ be an independent set in $H$ and define $\beta_i = |B\cap T_i|/|T_i|,$ $\beta_{p+j} = |B\cap S_j|/|S_j|$ for $i = 1,\ldots, p, j = 1,\ldots, q$.
\begin{cla} If there is an edge between $T_i$ and $S_j$ in $W$ then
\begin{equation}\label{eqorbit} \beta_i+\beta_{p+j}\le 1.\end{equation}
\end{cla}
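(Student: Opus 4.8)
The plan is to argue entirely inside the bipartite subgraph $H_{ij}$ of $H$ induced on the vertex set $T_i\cup S_j$. First I would check that $H_{ij}$ is biregular. Since $T_i$ and $S_j$ are $\Gamma$-orbits and $\Gamma$ preserves the parts and maps edges to edges, for any $u,u'\in T_i$ there is $\gamma\in\Gamma$ with $\gamma(u)=u'$; as $\gamma$ sends the edges of $H$ incident to $u$ bijectively to the edges incident to $u'$ and keeps their other endpoints inside the orbit $S_j$, the vertices $u$ and $u'$ have the same degree in $H_{ij}$. Call this common degree $d_1$; by the same token every vertex of $S_j$ has the same degree $d_2$ in $H_{ij}$. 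Counting the edges of $H_{ij}$ from each side gives $|T_i|\,d_1=|S_j|\,d_2$, and the hypothesis that $W$ has an edge between $T_i$ and $S_j$ guarantees $d_1,d_2\ge 1$.

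Next, write $X=B\cap T_i$ and $Y=B\cap S_j$. Independence of $B$ in $H$ forces the absence of any $H_{ij}$-edge between $X$ and $Y$, so the $|X|\,d_1$ edges of $H_{ij}$ meeting $X$ all have their other endpoint in $S_j\setminus Y$. Since the total number of $H_{ij}$-edges incident to $S_j\setminus Y$ is exactly $(|S_j|-|Y|)\,d_2$, double counting yields
\[
|X|\,d_1\le\bigl(|S_j|-|Y|\bigr)\,d_2 .
\]
Substituting $d_1=|S_j|\,d_2/|T_i|$ and dividing by $|S_j|\,d_2>0$ gives $|X|/|T_i|+|Y|/|S_j|\le 1$, which is precisely \eqref{eqorbit}.

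The degenerate cases ($X=\emptyset$ or $Y=\emptyset$) are covered automatically since the displayed inequality then holds trivially. The only step requiring any care is the biregularity of $H_{ij}$, which hinges on $T_i$ and $S_j$ being full $\Gamma$-orbits; once that is in place the bound is a one-line double count, so I do not anticipate a genuine obstacle here.
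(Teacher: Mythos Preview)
Your argument is correct and is essentially the same as the paper's: both observe that transitivity of $\Gamma$ on $T_i$ and $S_j$ makes the induced bipartite graph biregular with nonzero degrees, and then deduce the inequality from this regularity. The paper compresses the double count into the remark that the neighbourhood in $S_j$ of a $\beta_i$-fraction of $T_i$ contains at least a $\beta_i$-fraction of $S_j$, which is exactly what your inequality $|X|\,d_1\le(|S_j|-|Y|)\,d_2$ unpacks.
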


\begin{proof} Just note that $\Gamma$ is transitive on both $T_i$ and $S_j$. Therefore, $T_i,S_j$ induce a \textit{biregular} bipartite graph (i.e., degree is constant on each side) with nonzero degrees. The inequality (\ref{eqorbit}) follows easily from the aforementioned regularity, since among the neighbors of a  $\beta_i$-fraction of the vertices from $T_i$ there is at least a $\beta_i$-fraction of the vertices from $S_j$.
\end{proof}
We call any nonnegative vector $(\beta_1,\ldots, \beta_{p+q})$ satisfying (\ref{eqorbit}) a \textit{fractional independent set}. Note that each vector corresponding to an independent set in $W$ is a fractional independent set with coordinates from $\{0,1\}$.

\begin{proof}[Proof of Lemma \ref{lemorbit}]
We take a fractional independent set  $\mathbf v := (\beta_1,\ldots, \beta_{p+q})$ in $W$ with maximal weight $\sum_{i=1}^p \beta_i |T_i|+\sum_{j = 1}^q \beta_{p+j}|S_j|$. The vector  $\mathbf u := (1-\beta_1,\ldots, 1-\beta_{p+q})$ is a minimal weight fractional vertex cover (that is, for each each edge $(T_i,S_j)\in W$ we have $\mathbf u_i+\mathbf u_{p+j}\ge 1$). It is  a standard result in combinatorial optimization that there exists a minimal weight fractional vertex cover $\mathbf u$ in $W$ with integral coordinates (see, e.g., \cite{LRS}). Thus, there exists a maximal weight fractional independent set $\mathbf v$ in $W$ with all coordinates integral. This fractional independent set corresponds to the desired independent set in the graph $H$.
\end{proof}
\vskip+0.2cm

We recall the definition of the \textit{left shifting} (left compression), which we would simply refer to as \textit{shifting}. For a given  pair of indices $i<j\in [n]$ and a set $A \in 2^{[n]}$ we define the $(i,j)$-shift $S_{i,j}(A)$ of $A$ in the following way. If $i\in A$ or $j\notin A$, then $S_{i,j}(A) = A$. If $j\in A, i\notin A$, then $S_{i,j}(A) := (A-\{j\})\cup \{i\}$ that is, $S_{i,j}(A)$ is obtained from $A$  by replacing element $j$ with element $i$.

Next, we define the $(i,j)$-shift $S_{i,j}(\mathcal F)$ of a family $\mathcal F\subset 2^{[n]}$:

$$S_{i,j}(\mathcal F) := \{S_{i,j}(A): A\in \mathcal F\}\cup \{A: A,S_{i,j}(A)\in \mathcal F\}.$$

We call a family $\mathcal F$ \textit{shifted}, if $S_{i,j}(\mathcal F) = \mathcal F$ for all $1\le i<j\le n$.

\section{Proof of Theorem \ref{thm5}}
We assume that $s\ge 2$, as the case $s=1$ is covered by Theorem \ref{thmhm}. Recall that $\mathcal C = \{C\subset{[n]\choose k}: |C\cap [k]|\ge s\}$. It is not hard to see that if $\mathcal A, \mathcal B$ are s-cross-intersecting, then so are
$S_{i,j}(\mathcal A), S_{i,j}(\mathcal B)$. Therefore, we may w.l.o.g. assume that $\mathcal A, \mathcal B$ are shifted and thus both contain $[k]$. It is obvious that in any case $|\mathcal A|+|\mathcal B|\le 2|\mathcal C|$, since $\mathcal A,\mathcal B\subset \mathcal C$. Consider a bipartite graph $G = (\mathcal C-\{[k]\},\mathcal C-\{[k]\}, E)$ with two copies $\mathcal C^1,\mathcal C^2$ of $\mathcal C-\{[k]\}$ as parts and with two sets $C_1,C_2\in \mathcal C$ from different parts being connected by an edge if and only if $|C_1\cap C_2|<s$. Our goal is to show that the maximum independent set in $G$ has size $|\mathcal C|-1$. Indeed, it is clear that $|\mathcal A-\{[k]\}|+|\mathcal B-\{[k]\}|\le \alpha(G)$ and in that case we get the desired bound $|\mathcal A|+|\mathcal B|\le |\mathcal C|+1$. \\


One way to show that $G$ does not have independent sets larger than its parts is to exhibit a perfect matching in $G$. We were unable to do it in general. We are going to circumvent this problem by doing something different. We cover the vertices of a certain weighted graph directly related to $G$ by disjoint edges and paths of even vertex length in such a way that the \textit{total weight} of any independent set in any of these graphs is at most half of the total weight of the set of vertices of that subgraph.\\

We consider the following group of isomorphisms $\Gamma$, acting on $[n]$ (and on the vertex set of $G$). $\Gamma$ is a product of two groups, one consists of all permutations of $[k]$ and the other one consists of all permutations of $[k+1,n]$. Each of the parts of $G$ splits into orbits $\mathcal C_i^j$, where $j= 1,2$ and corresponds to the index of the part, and  $i = s,\ldots, k-1$ and indicates the size of the intersection of sets from $\mathcal C_i^j$ with $[k]$: $\mathcal C_i^j = \{C\in{[n]\choose k}: |C\cap [k]|= i\}$. We refer to these orbits as $\mathcal C_i$ if we think of them as set families.

Take an independent set $B$ in $G$ of largest size. Using Lemma \ref{lemorbit}, we may w.l.o.g. assume that for each $j, i$ either $B\supset \mathcal C^j_i$ or $B\cap \mathcal C^j_i = \emptyset.$ Consider an auxiliary \textit{weighted} graph $W$ as in the proof of Lemma \ref{lemorbit}. That is, the vertices of $W$ are $\mathcal C^j_i$ and the vertices from different parts are connected iff there are two sets from the corresponding orbits that intersect in less than $s$ elements. We also put weights on each vertex of $W$ equal to the cardinality of the corresponding orbit. Thus, in view of Lemma \ref{lemorbit}, it is enough for us to show that one part of $W$ has the same weight as the heaviest independent set in $W$.

Put $n = 2k-s+1+l$, where $l\ge 0$. It is easy to check that $\mathcal C_i^1$ is connected to $\mathcal C^2_t$ with $t \in \{k-i-l,\ldots, k-i+s-1\}\cap \{s,\ldots,k-1\}$. Next we study the weights of the interconnected vertices. We call the value of $i$ \textit{meaningful} if the resulting indices of all families depending on $i$ lie in the set $\{s,\ldots, k-1\}$. We need the following technical lemma.

\begin{lem}\label{lemcoeff} 1. $|\mathcal C_i|\ge |\mathcal C_{k-i+s-1}|$ iff $s\le i\le (k+s-1)/2$. \\
2. $|\mathcal C_{\lfloor\frac {k-l}2\rfloor-i}|\le |\mathcal C_{\lfloor\frac {k+s-1}2\rfloor+i}|$ for all meaningful positive integer values of $i$.
\end{lem}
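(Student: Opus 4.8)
Let me think about what these quantities are. We have $|\mathcal C_i| = \binom{k}{i}\binom{n-k}{k-i}$, the number of $k$-sets meeting $[k]$ in exactly $i$ elements. Part 1 is a statement about the ratio $|\mathcal C_i|/|\mathcal C_{k-i+s-1}|$. Part 2 compares $|\mathcal C_{\lfloor (k-l)/2\rfloor - i}|$ with $|\mathcal C_{\lfloor (k+s-1)/2\rfloor + i}|$, where $n = 2k-s+1+l$.

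The plan for part 1 is to write the ratio explicitly and determine when it is $\ge 1$. Set $j = k-i+s-1$. Since $n-k = k-s+1+l$, we have $\binom{k}{i}\binom{n-k}{k-i}$ versus $\binom{k}{j}\binom{n-k}{k-j}$. I would compute $\frac{|\mathcal C_i|}{|\mathcal C_j|} = \frac{\binom{k}{i}}{\binom{k}{j}}\cdot\frac{\binom{n-k}{k-i}}{\binom{n-k}{k-j}}$ and simplify. Note $k-j = i-s+1$ and $k-i = k-i$, so $\binom{n-k}{k-i}/\binom{n-k}{i-s+1}$; using $n-k = (k-i)+(i-s+1)+l$ one sees this ratio behaves monotonically. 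The upshot should be that the whole ratio is $\ge 1$ exactly when $i$ is the "smaller" of the pair $\{i, k-i+s-1\}$, i.e. when $i \le k-i+s-1$, which rearranges to $i \le (k+s-1)/2$; combined with the standing assumption $i \ge s$ (so that $\mathcal C_i$ is defined, $i$ ranging in $\{s,\dots,k-1\}$) this gives exactly the stated range. The key technical point is verifying the $l$-dependent binomial ratio $\binom{n-k}{k-i}/\binom{n-k}{i-s+1}$ does not spoil monotonicity — I expect to handle this by checking that $\binom{k}{i}\binom{n-k}{k-i}$ as a function of $i$ (for fixed $n,k$) is log-concave, hence unimodal, and that the two indices $i$ and $k-i+s-1$ are reflections of each other about the axis $(k+s-1)/2$; unimodality plus symmetry of the index pair then immediately gives the "iff".

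For part 2, I would proceed similarly, writing $a = \lfloor (k-l)/2\rfloor - i$ and $b = \lfloor (k+s-1)/2\rfloor + i$ and showing $|\mathcal C_a| \le |\mathcal C_b|$ via the same unimodality. The natural strategy: by unimodality of $t\mapsto |\mathcal C_t|$, it suffices to show that $b$ is "at least as close to the mode as $a$, but on the far side", or more simply that $a \le b$ together with $a+b$ lying on the correct side of (twice) the mode $m$ of the sequence $|\mathcal C_t|$. The mode of $\binom{k}{t}\binom{n-k}{k-t}$ is near $t = k^2/n = k^2/(2k-s+1+l)$; I would compute $a+b = \lfloor(k-l)/2\rfloor + \lfloor(k+s-1)/2\rfloor$ and compare it to $2m$, showing $a+b \ge 2m$ (so the pair straddles the mode with $b$ on the decreasing... actually with $b \ge a$ and centroid past the mode, $|\mathcal C_b| \ge |\mathcal C_a|$ requires $b$ no farther from $m$ than $a$). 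The inequality should come out to hold for all $l \ge 0$ and all meaningful $i \ge 1$; the floors are there precisely to make the arithmetic work, and dealing with the two parity cases of $k-l$ and $k+s-1$ will be the fiddly part.

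The main obstacle I anticipate is \textbf{not} the conceptual structure — unimodality/log-concavity of $\binom{k}{t}\binom{n-k}{k-t}$ is standard — but rather the bookkeeping: getting the floor functions, the role of $l$, and the constraint "meaningful" to line up so that the claimed inequalities hold with no exceptional small cases. I would first prove the clean statement "$t \mapsto |\mathcal C_t|$ is log-concave on its support, hence unimodal" as a sub-lemma, then reduce both parts to comparing how far the relevant indices sit from the mode, and only at the end grind through the floor arithmetic separately for each parity class, checking the boundary values of $i$ (the smallest meaningful $i$, namely $i=1$, and the largest) by hand.
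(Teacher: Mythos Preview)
Your plan contains a real gap, and it affects both parts. The reduction to log-concavity/unimodality of $t\mapsto |\mathcal C_t|=\binom{k}{t}\binom{n-k}{k-t}$ does not do what you claim. In Part~1 you say ``unimodality plus symmetry of the index pair then immediately gives the iff''. That implication is valid only if the \emph{sequence} is symmetric about the reflection point $(k+s-1)/2$, i.e.\ if that point coincides with the mode. It does not: the mode of this hypergeometric sequence is near $k^2/n$, and for $n=2k-s+1+l$ one checks $(k+s-1)(2k-s+1)=2k^2+(s-1)(k-s+1)>2k^2$, so already at $l=0$ the mode lies strictly below $(k+s-1)/2$ whenever $s\ge 2$. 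Thus the pair $(i,\,k-i+s-1)$ typically straddles the mode asymmetrically, and unimodality alone is silent on which side is larger. For instance with $k=10$, $s=2$, $l=0$ the mode is at $t=5$ while the reflection axis is $5.5$; for $i=2$ you must compare $|\mathcal C_2|=405$ with $|\mathcal C_9|=90$, and nothing in ``unimodal with mode at $5$'' forces $405\ge 90$. The same objection kills your Part~2 criterion ``$a+b$ on the correct side of $2m$'': for a merely log-concave sequence this does \emph{not} determine the sign of $c_a-c_b$. (Take the log-concave sequences $(1,4,3,2)$ and $(3,4,2,1)$: both have mode at index $1$, and with $a=0,b=2$ one has $a+b=2m$ in each, yet $c_a<c_b$ in the first and $c_a>c_b$ in the second.)

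What is actually doing the work in the lemma is the hypothesis $n\ge 2k-s+1$, and it has to enter the argument quantitatively. The paper proceeds by writing the ratio $|\mathcal C_i|/|\mathcal C_{k-i+s-1}|$ as a product of two telescoping fractions and bounding each factor using the two inequalities $k-i+s-1\ge i$ (from $i\le (k+s-1)/2$) and $n-k-i+s-1\ge k-i$ (which is exactly $n\ge 2k-s+1$). Part~2 is handled the same way, splitting into the cases $\lfloor (k-l)/2\rfloor\ge \lceil (k-s+1)/2\rceil$ and its complement and again estimating the ratio factor by factor, with the floor/ceiling bookkeeping you anticipated. Your instinct that the argument should reduce to ``which index is closer to a centre'' is morally right, but the sequence is lopsided enough that you cannot avoid this explicit factor-by-factor computation; log-concavity by itself is too coarse.
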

\begin{proof} The proof of the lemma is just a careful algebraic manipulation. We remark that for the first reading one may omit all the integer parts in the computations below to make the verification easier.

1. Assume $i\le (k+s-1)/2$. We have $|\mathcal C_i| = {k\choose i}{n-k\choose k-i}$, $|\mathcal C_{k-i+s-1}| = {k\choose k-i+s-1}{n-k\choose i-s+1}.$ Therefore,

$$\frac{|\mathcal C_i|}{|\mathcal C_{k-i+s-1}|} = \frac{(i-s+1)!^2(k-i+s-1)!(n-k-i+s-1)!}{(k-i)!^2i!(n-2k+i)!} = $$$$ \frac{(k-i+s-1)\cdots(k-i+1)}{i\cdots(i-s+2)}\cdot\frac{(n-k-i+s-1)\cdots(n-2k+i+1)}{(k-i)\cdots(i-s+2)}\ge 1,$$

because $k-i+s-1\ge i$ since $i\le (k+s-1)/2,$ and $n-k-i+s-1\ge k-i$ since $n\ge 2k-s+1$.\\

2. Assume first that $\lfloor\frac{k-l}2\rfloor \ge \lceil\frac{k-s+1}2\rceil$. We have

$$\frac{|\mathcal C_{\lfloor\frac {k+s-1}2\rfloor+i}|}{|\mathcal C_{\lfloor\frac {k-l}2\rfloor-i}|} = \frac{(\lceil\frac{k+l}2\rceil+i)!^2(\lfloor\frac{k-l}2\rfloor-i)!(n-k-\lceil\frac{k+l}2\rceil-i)!}{(\lceil\frac{k-s+1}2\rceil-i)!^2 (\lfloor\frac{k+s-1}2\rfloor+i)!(n-k-\lceil\frac{k-s+1}2\rceil+i)!} = $$
$$ \frac{(\lfloor\frac {k-l}2\rfloor-i) \cdots(\lceil\frac{k-s+1}2\rceil-i+1)}{(\lfloor\frac{k+s-1}2\rfloor+i)\cdots(\lceil\frac{k+l}2\rceil+i+1)} \cdot \frac{(\lceil\frac{k+l}2\rceil+i)\cdots(\lceil\frac{k-s+1}2\rceil-i+1)}{(n-k-\lceil\frac{k-s+1}2\rceil+i)\cdots(n-k-\lceil\frac{k+l}2\rceil-i+1)} =: (*).$$
We note that $n = 2k-s+1+l$, and therefore $n-k-\lceil\frac{k-s+1}2\rceil+i = \lfloor\frac{k-s+1}2\rfloor+l+i \le (\lceil\frac{k+l}2\rceil+i)-\lceil\frac{s-1-l}2\rceil.$

\begin{small}$$(*) \ge  \frac{(\lfloor\frac {k-l}2\rfloor-i) \cdots(\lceil\frac{k-s+1}2\rceil-i+1)}{(\lfloor\frac{k+s-1}2\rfloor+i)\cdots(\lceil\frac{k+l}2\rceil+i+1)} \cdot
\frac{(\lceil\frac{k+l}2\rceil+i)\cdots(\lceil\frac{k+l}2\rceil+i-\lceil\frac{s-1-l}2\rceil+1)} {(n-k-\lceil\frac{k+l}2\rceil+\lceil\frac{s-1-l}2\rceil-i)\cdots(n-k-\lceil\frac{k+l}2\rceil-i+1)}
$$\end{small}
In the first fraction the number of factors in both denominator and numerator is the same and is $\lfloor\frac {k-l}2\rfloor- \lceil\frac{k-s+1}2\rceil$, which is at most $\lceil\frac{s-1-l}2\rceil$. The ratio of the $j+1$-st factors in the first fraction, that is, the expression $\frac{\lfloor\frac {k-l}2\rfloor-i-j}{\lfloor\frac{k+s-1}2\rfloor+i-j}$ for $j\in \{0,\ldots, \lceil\frac{s-1-l}2\rceil-1\}$ is at most $1$ and thus does not increase as $j$ increases.

In the second fraction the number of factors in both the numerator and denominator is $\lceil\frac{s-1-l}2\rceil$. Similarly to the first fraction, the expression $\frac{\lceil\frac{k+l}2\rceil+i-j}{n-k-\lceil\frac{k+l}2\rceil+\lceil\frac{s-1-l}2\rceil-i-j}$ for $j\in \{0,\ldots, \lceil\frac{s-1-l}2\rceil-1\}$ is at least $1$ and thus does not decrease as $j$ increases. Therefore, the product of these two fractions is at least

$$\left(\frac{\lceil\frac{k-s+1}2\rceil-i}{\lceil\frac{k+l}2\rceil+i}\right)^{\lceil\frac{s-1-l}2\rceil} \left(\frac{\lceil\frac{k+l}2\rceil+i}{n-k-\lceil\frac{k+l}2\rceil+\lceil\frac{s-1-l}2\rceil-i}\right)^{\lceil\frac{s-1-l}2\rceil} =$$
$$\left(\frac{\lceil\frac{k-s+1}2\rceil-i}{k-s+l+1-\lceil\frac{k+l}2\rceil+\lceil\frac{s-1-l}2\rceil-i}\right)^{\lceil\frac{s-1-l}2\rceil} \ge 1.$$

The last inequality holds since without integer parts the denominator is equal to the enumerator, and the possible gain because of the integer parts in the denominator is at most $1/2$. However, both numbers are integer, thus the denominator is not bigger than the enumerator.\\

Assume now that $\lfloor\frac{k-l}2\rfloor < \lceil\frac{k-s+1}2\rceil$.

$$\frac{|\mathcal C_{\lfloor\frac {k+s-1}2\rfloor+i}|}{|\mathcal C_{\lfloor\frac {k-l}2\rfloor-i}|} = \frac{(\lceil\frac{k+l}2\rceil+i)!^2(\lfloor\frac{k-l}2\rfloor-i)!(n-k-\lceil\frac{k+l}2\rceil-i)!}{(\lceil\frac{k-s+1}2\rceil-i)!^2 (\lfloor\frac{k+s-1}2\rfloor+i)!(n-k-\lceil\frac{k-s+1}2\rceil+i)!} = $$
$$ \frac{(\lceil\frac{k+l}2\rceil+i)\cdots(\lfloor\frac{k+s-1}2\rfloor+i+1)}{ (\lceil\frac{k-s+1}2\rceil-i)\cdots(\lfloor\frac {k-l}2\rfloor-i+1)} \cdot \frac{(\lceil\frac{k+l}2\rceil+i)\cdots(\lceil\frac{k-s+1}2\rceil-i+1)}{(n-k-\lceil\frac{k-s+1}2\rceil+i)\cdots(n-k-\lceil\frac{k+l}2\rceil-i+1)} =: (*).$$

We note that $n = 2k-s+1+l$, and therefore $n-k-\lceil\frac{k-s+1}2\rceil+i = \lfloor\frac{k-s+1}2\rfloor+l+i \le (\lceil\frac{k+l}2\rceil+i)+\lfloor\frac{l-s+1}2\rfloor.$

\begin{multline*}(*) \ge \frac{(\lceil\frac{k+l}2\rceil+i)\cdots(\lfloor\frac{k+s-1}2\rfloor+i+1)}{ (\lceil\frac{k-s+1}2\rceil-i)\cdots(\lfloor\frac {k-l}2\rfloor-i+1)}
\cdot\\
\frac{(\lceil\frac{k-s+1}2\rceil+\lfloor\frac{l-s+1}2\rfloor-i)\cdots(\lceil\frac{k-s+1}2\rceil-i+1)} {(n-k-\lceil\frac{k-s+1}2\rceil+i)\cdots(n-k-\lceil\frac{k-s+1}2\rceil- \lfloor\frac{l-s+1}2\rfloor+i+1)}.\end{multline*}
Bounding the two fractions similarly to how it was done in the previous case, one can see that this expression is at least
$$\left(\frac{\lceil\frac{k+l}2\rceil+i+1}{\lceil\frac{k-s+1}2\rceil-i+1}\right)^{\lfloor\frac{l-s+1}2\rfloor} \left(\frac{\lceil\frac{k-s+1}2\rceil-i+1}{n-k-\lceil\frac{k-s+1}2\rceil- \lfloor\frac{l-s+1}2\rfloor+i+1}\right)^{\lfloor\frac{l-s+1}2\rfloor} =$$
$$\left(\frac{\lceil\frac{k+l}2\rceil+i+1}{k-s+l+1-\lceil\frac{k-s+1}2\rceil- \lfloor\frac{l-s+1}2\rfloor+i+1}\right)^{\lfloor\frac{l-s+1}2\rfloor} \ge 1.$$
As in the previous case, in the last fraction the denominator is equal to the enumerator if one removes all integer parts, and as in the previous case we conclude that the last inequality is valid.
\end{proof}
\vskip+0.2cm

The next crucial step is to decompose the graph $W$ into symmetric chains of even length. Let  $(j,\bar j) = (1,2)$ or $(2,1)$. We know that $\mathcal C^{j}_i$ is connected to $\mathcal C^{\bar j}_{k+s-1-i}$ for all $i$.  We call these edges \textit{edges of the first type}. Next, $\mathcal C^j_{i}$ is connected to $\mathcal C^{\bar j}_i$ if $ \lceil\frac{k-l}2\rceil\le i< \frac{k+s-1}2.$ We call these edges \textit{edges of the second type}. Finally, $\mathcal C^j_{\lfloor\frac {k-l}2\rfloor-i}$ is connected to $\mathcal C^{\bar j}_{\lfloor\frac {k+s-1}2\rfloor+i}$ for each meaningful $i\ge 1$ and $s\ge 2$. We call these edges \textit{edges of the third type}.

We claim that the graph $W$ is decomposed into paths of even length using the three types of edges above. See Fig. 1, where the two parts are represented by two horizontal lines of points, and the values near the points indicate the size of the intersection of the sets of the corresponding family with $[k]$. The arrows on the edges indicate the direction from the vertex of smaller weight to the vertex of larger weight. The black two-directional edges are the edges of the second type and connect the vertices of the same weight. The vertical edges are the edges of the first type, and the other edges are the edges of the third type.

The middle vertical line corresponds to the size of the intersection with $[k]$ equal to $\frac{k+s-1}2$ in both parts. The two dashed lines correspond to the intersection size $\lceil \frac{k-l}2\rceil$ for one of the parts. Between these two lines each vertex has an edge of the second type. Note that the starting vertex of the edges of the third type is always outside the region bounded between the two dashed lines. The edges of the third type may or may not intersect the dashed lines, but they never intersect the black line.

\begin{center}  \includegraphics[width=150mm]{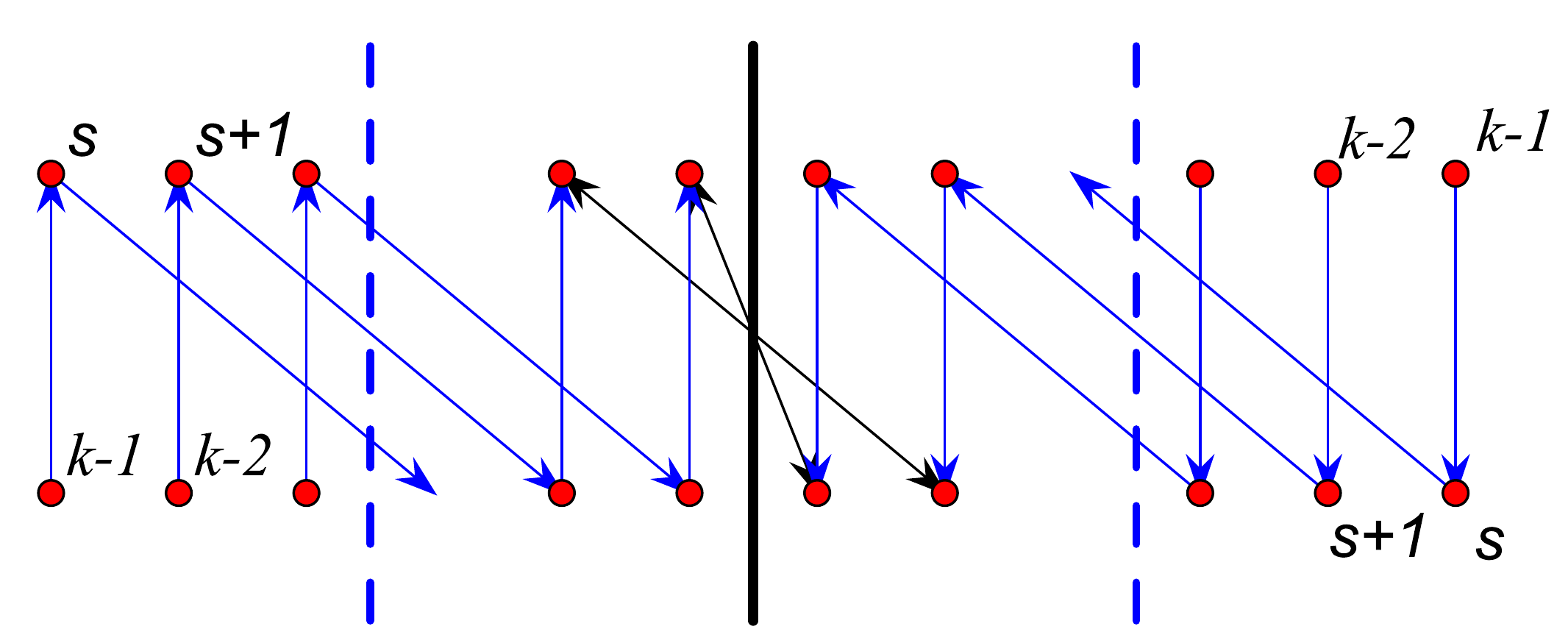}\label{pic1}  \end{center}\begin{center}  Figure 1. Decomposition of the graph $W$ into symmetric chains. \end{center}

In the middle of each path $P$ there is an edge of the second type, connecting two vertices of the same weight. Due to Lemma \ref{lemcoeff}, for each path the weight of the vertex does not decrease as we move along the path towards the middle edge of the second type. Therefore, it is easy to see that any independent set in $P$ has weight less than or equal to the half of the weight of $P$. This means that any independent set in $W$ cannot have weight bigger than the half of the weight of $W$. This concludes the proof of Theorem \ref{thm5}.

\subsubsection*{Acknowledgements} We thank the anonymous referee for carefully reading the paper and giving several suggestions that helped to improve the presentation of the results.

\end{document}